\newcommand{\E}{\mathbb{E}}
\renewcommand{\ge}{\geqslant}
\renewcommand{\le}{\leqslant}
\renewcommand{\geq}{\geqslant}
\renewcommand{\leq}{\leqslant}
\renewcommand{\epsilon}{\varepsilon}
\renewcommand{\cdots}{\dots}
\theoremstyle{plain}
\newtheorem{theorem}{Theorem}
\newtheorem{lemma}[theorem]{Lemma}
\newtheorem{proposition}[theorem]{Proposition}
\theoremstyle{definition}
\newtheorem{example}{Example}[section]
\theoremstyle{remark}
\numberwithin{equation}{section} \numberwithin{theorem}{section}
\def\T{\top}
\def\E{\mathbb{E}}
\def\var{\mathrm{var}}
\begin{document}

\title{Compatible Matrices of Spearman's Rank Correlation}

\author{Bin Wang\thanks{RCSDS, NCMIS, Academy of Mathematics and Systems Science,
Chinese Academy of Sciences, Beijing 100190, China. Email: \url{wangbin@amss.ac.cn} }
\and Ruodu Wang\thanks{Department of Statistics and Actuarial Science, University of Waterloo, Waterloo, Ontario N2L 3G1, Canada. Email: \url{wang@uwaterloo.ca}} \and
Yuming Wang\thanks{School of Mathematical Sciences, Peking University, Beijing 100871, China. Email: \url{wangyuming@pku.edu.cn}}
}
\date{}
\maketitle

\begin{abstract}
In this paper, we provide a negative answer to a long-standing open problem on the compatibility of Spearman's rho matrices. Following an equivalence of Spearman's rho matrices and linear correlation matrices for dimensions up to 9 in the literature, we show non-equivalence for dimensions 12 or higher. In particular, we connect this problem with the existence of a random vector under some linear projection restrictions in two characterization results.


\textbf{Keywords}:
Compatibility; Copula; Rank correlation matrix; Risk management; Spearman's rho
\end{abstract}

\newpage


\section{Introduction}
\subsection{Origin of the question}
The use of copulas and measures of association  has been brought into many areas of statistical applications
since the mid 1990s, when dependence was mainly thought of in terms of linear
correlation (matrices).
One of the most important measures of association is the \emph{Spearman's rank correlation coefficient} (referred to as \emph{Spearman's rho}), defined as the linear correlation coefficient between the rank variables of two random variables.  Although there are various ways to compute  measures of association in
dimension $d>2$, they are still most widely used and understood in the
bivariate case $d=2$. As a consequence,
for a model of more than two dimensions, one typically relies on a matrix of the values of pairwise Spearman's rho correlations. This is certainly analogous to the use of a covariance matrix or a correlation matrix to model dependence among random variables.

One of the key open problems of bivariate \emph{Spearman's rho (rank correlation) matrices} is their \emph{compatibility}.
Below we quote the seminal paper \cite{EMS02} in the realm of Quantitative Risk Management:
\begin{center}
	\it ``That is, given an arbitrary symmetric, positive semi-definite
	matrix with unit elements on the diagonal and off-diagonal elements in the interval
	$[-1,1]$, can we necessarily find a random vector with continuous marginals for which
	this is the rank correlation matrix, or alternatively a multivariate distribution for
	which this is the linear correlation matrix of the copula?" If we estimate a rank
	correlation matrix from data, is it guaranteed that the estimate is itself a rank
	correlation matrix? A necessary condition is certainly that the estimate is a linear
	correlation matrix, but we do not know if this is sufficient."
\end{center}

In other words, the key question is \emph{whether a linear correlation matrix is necessarily a rank correlation matrix}.
This paper is dedicated to this long-standing open question.
In particular, we obtain a negative answer for the first time:
 a linear correlation matrix is not equivalent to a rank correlation matrix  for dimension $12$ or higher.
 The fact that this question has been open for a long time was partially because of the lack of tools to justify a negative answer.
We provide two related results (Theorems \ref{main_theorem} and \ref{th:2}) on this question, which lead to a counter-example for dimension $12$.

\subsection{Known results and related literature}
 In the literature, there are quite some papers addressing this question, mostly proving that the answer to the main question is positive for low dimensions.
First,  as mentioned in \cite{EMS02}, the answer to this question is positive for the case $d=2$, that is, an arbitrary $2\times 2$ linear correlation matrix is always a rank correlation matrix. This fact can be easily verified since there is only one degree of freedom in the case of $d=2$.
For $d=3$, there are several methods to obtain a positive answer: \citet[Section 4.4.6]{Kuro-cooke2006}  made use of the vine-copula method to show that all correlation matrices can be reached by elliptical copulas; \cite{hj2006} obtained an affirmative answer for the cases $d\le 4$ by analyzing marginal distributions of spherical distributions, and had some discussions on the case $d\ge 5$;  \cite{Devr2010} obtained another construction for the desired copula.
 More recently, \cite{Devr2015} showed that the answer to the above question is positive for $d\leq 9$. Their method relies on a crucial result of \cite{Ycart85} which characterized the extreme points of the set of all linear correlation matrices. 
 However, to the best of our knowledge, it remains unsolved whether the answer is always positive for any dimension, although many guessed that the statement should be false for sufficiently large dimension (see e.g.~\cite{hj2006} and \cite{Devr2015}). This is precisely the main target of this paper.

The compatibility problems of matrices are also studied for other  bivariate measures of association; see, for instance, \cite{CJ06} and \cite{EHW16} for compatibility of Bernoulli correlation and tail-dependence matrices.

\section{Compatibility of Spearman's rho matrices}
Throughout this paper, $d$ is a positive integer, and we fix an
atomless probability space  
on which all random
variables and random vectors are defined.
The $d\times d$ identity matrix is denoted by $I_d$.
For two random variables $X_1$ and $X_2$ with continuous distributions $F_1$ and $F_2$, respectively, their \emph{Spearman's\ rho} is defined as
\begin{equation*}
\rho^S(X_1,X_2)=\rho^P(F_1(X_1),F_2(X_2))=12\E(F_1(X_1)F_2(X_2))-3,
\end{equation*}
where $\rho^P$ is the Pearson's correlation coefficient.

For a $d$-dimensional random vector $X=(X_1,\ldots,X_d)$ with continuous marginal distributions, the \emph{Spearman's\ rho\ matrix} of $X$ is defined as
\begin{equation*}
R(X)=(R_{ij})_{d\times d},\ \mbox{where}\ R_{ij}=\rho^S(X_i,X_j)\quad i,j=1,\ldots,d.
\end{equation*}
A $d\times d$ matrix is called a Spearman's rho matrix if it is the Spearman's rho matrix of some $d$-dimensional random vector.
Since the Spearman's rho is  determined by the unique copula of $X$ (see \cite{J14} and \cite{MFE15}), it is sufficient to consider random vectors with all marginal distributions being uniform on $[0,1]$.

A square matrix is called \emph{standardized} if  all its diagonal entries are one.
Let $\mathcal P_d$ be the set of $d\times d$ standardized symmetric positive semi-definite matrices, and $\mathcal S_d$ be the set of $d\times d$ Spearman's rho matrices.
Clearly, $\mathcal P_d$ is also the set of all linear correlation matrices. Proposition \ref{prop:1} summarizes some simple properties of the set $\mathcal S_d$, which are straightforward to verify.

\begin{proposition}\label{prop:1}
	${\mathcal S}_d$ is a convex set, is closed with respect to matrix convergence, and ${\mathcal S}_d\subset \mathcal P_d$.
\end{proposition}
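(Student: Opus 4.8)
The plan is to verify the three assertions in turn, working throughout with random vectors whose one-dimensional margins are uniform on $[0,1]$; this is legitimate by the reduction noted just before the proposition, and for such a vector $U=(U_1,\dots,U_d)$ one has $R_{ij}(U)=\rho^P(U_i,U_j)=12\,\E(U_iU_j)-3$, which is the identity I would lean on everywhere.

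For convexity, suppose $R^{(0)},R^{(1)}\in\S_d$ are realized by uniform vectors $U^{(0)}$ and $U^{(1)}$, and fix $\lambda\in[0,1]$. On the atomless probability space I would pick a Bernoulli variable $B$ with $\p(B=1)=\lambda$, independent of $(U^{(0)},U^{(1)})$, and set $U=U^{(B)}$, i.e.\ $U=U^{(1)}$ on $\{B=1\}$ and $U=U^{(0)}$ on $\{B=0\}$. Each coordinate $U_i$ is then a mixture of two uniform laws on $[0,1]$, hence itself uniform on $[0,1]$, so $U$ has the right margins; and $\E(U_iU_j)=\lambda\,\E(U_i^{(1)}U_j^{(1)})+(1-\lambda)\,\E(U_i^{(0)}U_j^{(0)})$, which gives $R(U)=\lambda R^{(1)}+(1-\lambda)R^{(0)}$. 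For the inclusion $\S_d\subset\mathcal P_d$, I would simply observe that a Spearman's rho matrix $R=R(U)$ is exactly the Pearson correlation matrix of the uniform vector $U$: the diagonal entries equal $\rho^S(X_i,X_i)=1$, and for any $a\in\R^d$, using $\var(U_i)=1/12$, we get $a^\T R a=12\sum_{i,j}a_ia_j\,\Cov(U_i,U_j)=12\,\Var\bigl(\sum_i a_iU_i\bigr)\ge 0$, so $R$ is symmetric positive semi-definite with unit diagonal.

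For closedness, take $R^{(n)}\in\S_d$ with $R^{(n)}\to R$ (entrywise), and let $C^{(n)}$ be a copula realizing $R^{(n)}$. The class of $d$-dimensional copulas is sequentially compact under weak convergence — one can invoke Helly's selection theorem for the associated distribution functions on the compact cube $[0,1]^d$, the limiting distribution function again being a copula since the uniform margins pass to the limit. Hence, along a subsequence, $C^{(n_k)}\to C$ weakly for some copula $C$. Since the map $(u_1,\dots,u_d)\mapsto u_iu_j$ is bounded and continuous on $[0,1]^d$, weak convergence yields $\int u_iu_j\,\d C^{(n_k)}\to\int u_iu_j\,\d C$, and therefore $R_{ij}=\lim_k R^{(n_k)}_{ij}=12\int u_iu_j\,\d C-3=R_{ij}(C)$ for all $i,j$, i.e.\ $R=R(C)\in\S_d$.

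I do not expect a genuine obstacle in any of the three parts; the only step needing slightly more than a one-line justification is the closedness argument, where one must combine the compactness of the copula class with the fact that each pairwise Spearman's rho is a weakly continuous functional of the copula. Everything else reduces to the mixture construction, the elementary identity $\var(U_i)=1/12$, and the definition of positive semi-definiteness.
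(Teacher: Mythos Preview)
Your proof is correct in all three parts; the mixture construction for convexity, the identification of $R(U)$ with the Pearson correlation matrix of the uniform vector for the inclusion $\S_d\subset\mathcal P_d$, and the compactness-of-copulas argument for closedness are each sound. The paper itself does not supply a proof of this proposition at all---it merely states that the properties are ``straightforward to verify''---so there is no approach to compare against, but your argument is precisely the standard verification one would expect the authors to have in mind.
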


The key  question that we study is whether $\mathcal S_d=\mathcal P_d$, i.e.~the opposite direction of the inclusion in Proposition \ref{prop:1} holds.
As already mentioned above, the answer to the above question is positive for $d\le 9$ as shown by \cite{Devr2015}.
Precisely, for $d\le 9$,
	a $d\times d$ matrix is a Spearman's rho matrix if and only if it is a standardized symmetric positive semi-definite matrix.

To address the case $d \ge 10$, which is much more complicated to study,
we establish in Theorem \ref{main_theorem} an equivalent condition for a linear correlation matrix $R$ to be a rank correlation matrix, which will be useful later.
We first review the rank decomposition of a matrix.
If a $d\times d$ matrix $R\in\mathcal{P}_d$ has rank $k$, then, as a well-known result in linear algebra, there exists 
a $d\times k$ column-full-rank matrix $A \in {\mathbb R}^{d\times k}$ satisfying
	\begin{equation}\label{eq_1}
	R=AA^{\T}.
	\end{equation}
For a $d\times d$ matrix $R\in\mathcal{P}_d$, the matrix $A$ in \eqref{eq_1} is called a \emph{rank\ decomposition} of $R$, which is obviously not unique.
Since $R$ has diagonal entries $1$, every row vector of $A$ is a unit  vector in ${\mathbb R}^k$.
 We denote by $\mathcal A_{d, k}$ the set of $d\times k$ matrices $A$ 
 such that each row of $A$ is a unit vector. In other words, a matrix  $R$ of rank $k$ is in $\mathcal P_d$ if and only if there exists $A\in \mathcal A_{d, k}$ of rank $k$ such that $R=AA^\T$.



\begin{theorem}\label{main_theorem}
	For a matrix  $R\in \mathcal P_d$ of rank $k$, the following statements are equivalent.
	\begin{enumerate}[(a)]
\item $R \in {\mathcal S}_d$.

\item  For any rank decomposition $A\in \mathbb  R^{d\times k}$ of $R$, there exists a $k$-dimensional random vector  $V$ with mean vector $0$ and covariance matrix $I_k$ such that marginal distributions of $AV$ are all $\mathrm U[-\sqrt{3},\sqrt{3}]$.	
\item There exist a rank decomposition $A\in \mathbb R^{d\times k}$ of $R$  and a $k$-dimensional random vector $V$ with mean vector $0$ and covariance matrix $I_k$ such that marginal distributions of $AV$ are all $\mathrm U[-\sqrt{3},\sqrt{3}]$.
\end{enumerate}

\end{theorem}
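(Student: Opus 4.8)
The plan is to prove the cycle $(a) \Rightarrow (b) \Rightarrow (c) \Rightarrow (a)$, with the implication $(b)\Rightarrow(c)$ being immediate (specialize the quantifier ``for any'' to a concrete choice, which exists because $R\in\mathcal P_d$ has a rank decomposition). So the work is in $(a)\Rightarrow(b)$ and $(c)\Rightarrow(a)$.

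For $(a)\Rightarrow(b)$: suppose $R\in\mathcal S_d$, so $R = R(U)$ for some random vector $U=(U_1,\dots,U_d)$ whose marginals are uniform on $[0,1]$. Standardize each coordinate by setting $X_i = \sqrt{12}\,(U_i - 1/2)$, so that $X_i \sim \mathrm U[-\sqrt 3,\sqrt 3]$, each $X_i$ has mean $0$ and variance $1$, and $\Cov(X) = R$ since the Spearman's rho of $U$ equals the Pearson correlation of the $X_i$'s. Now fix an arbitrary rank decomposition $R = AA^\T$ with $A\in\mathbb R^{d\times k}$ of rank $k$. Since $X$ lives (almost surely) in the column space of $A$ — because $\Cov(X)=AA^\T$ forces $X\in\mathrm{range}(A)$ a.s. — and $A$ has full column rank, there is a unique random vector $V$ with $X = AV$ a.s.; concretely $V = (A^\T A)^{-1}A^\T X$. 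Then $\E[V] = (A^\T A)^{-1}A^\T \E[X] = 0$ and $\Cov(V) = (A^\T A)^{-1}A^\T R\, A(A^\T A)^{-1} = (A^\T A)^{-1}(A^\T A)(A^\T A)^{-1} = I_k$, using $A^\T R A = A^\T A A^\T A$. Since $AV = X$, the marginals of $AV$ are all $\mathrm U[-\sqrt 3,\sqrt 3]$, which is exactly (b).

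For $(c)\Rightarrow(a)$: given a rank decomposition $R=AA^\T$ and a random vector $V$ with $\E[V]=0$, $\Cov(V)=I_k$, and all marginals of $AV$ equal to $\mathrm U[-\sqrt 3,\sqrt 3]$, set $X = AV$ and then $U_i = F(X_i)$ where $F$ is the cdf of $\mathrm U[-\sqrt3,\sqrt3]$; equivalently $U_i = X_i/(2\sqrt3) + 1/2$, so each $U_i\sim\mathrm U[0,1]$ and the copula of $U$ coincides with that of $X$. We compute $\Cov(X) = A\,\Cov(V)\,A^\T = AA^\T = R$, hence $\Var(X_i)=1$ for each $i$ and $\rho^P(X_i,X_j) = R_{ij}$. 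Because each $X_i$ is a strictly increasing transform of the corresponding $U_i$, the Spearman's rho matrix of $U$ equals $R(X) = (\rho^P(X_i,X_j))_{ij} = R$, so $R\in\mathcal S_d$.

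The routine parts are the linear-algebra identities ($A^\T R A = A^\T A A^\T A$, the well-definedness of $V$, the covariance computation) and the deterministic affine rescaling between $\mathrm U[0,1]$ and $\mathrm U[-\sqrt3,\sqrt3]$. The one point deserving care — and the main obstacle to a fully rigorous write-up — is the a.s.\ containment $X\in\mathrm{range}(A)$ in $(a)\Rightarrow(b)$: it follows because $\E\big[\|(I_d - P)X\|^2\big] = \mathrm{tr}\big((I_d-P)R(I_d-P)\big) = 0$ where $P = A(A^\T A)^{-1}A^\T$ is the orthogonal projection onto $\mathrm{range}(A) = \mathrm{range}(R)$, using $(I_d-P)R = 0$. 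Once that is established the rest is bookkeeping, and the quantifier structure of (b) versus (c) is handled precisely because the argument for $(a)\Rightarrow(b)$ was carried out for an \emph{arbitrary} rank decomposition $A$.
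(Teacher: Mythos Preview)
Your proof is correct and follows the same overall cycle $(a)\Rightarrow(b)\Rightarrow(c)\Rightarrow(a)$ as the paper, with the same easy steps $(b)\Rightarrow(c)$ and $(c)\Rightarrow(a)$. The only substantive difference is in the construction of $V$ for $(a)\Rightarrow(b)$: the paper fixes $k$ linearly independent rows of $A$ to form an invertible $k\times k$ block $B$, sets $V=B^{-1}(Y_1,\dots,Y_k)^\T$, and then verifies $Y_i=a_i^\T V$ for the remaining coordinates via $\E[(Y_i-a_i^\T V)^2]=0$. You instead use the pseudoinverse, defining $V=(A^\T A)^{-1}A^\T X$ and justifying $X=AV$ by the projection identity $\E[\|(I_d-P)X\|^2]=\mathrm{tr}((I_d-P)R)=0$. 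Your route is coordinate-free and avoids the ``without loss of generality'' reduction, while the paper's route is more hands-on; both yield the same $V$ (since $X\in\mathrm{range}(A)$ forces the two linear maps to agree on $X$) and require the same amount of work.
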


\begin{proof}
	We prove the theorem via the route (a)$\Rightarrow$(b)$\Rightarrow$(c)$\Rightarrow$(a).
Note that (b)$\Rightarrow$(c) is trivial. For (c)$\Rightarrow$(a), note that $AV$ has Spearman's rho matrix $R$  because $\E[AV(AV)^\top ] =AI_kA^\top = R$, and hence $R\in {\mathcal S}_d$.
Below we show (a)$\Rightarrow$(b).

Without loss of generality,  assume the first $k$ row vectors of matrix $R=(R_{ij})_{d\times d}$ are linearly independent. From this assumption, we know that the first $k$ row vectors of any rank decomposition matrix $A=(a_{ij})_{d\times k}$ of $R$ are also linearly independent, which constitute a full rank square matrix  $B=(a_{ij})_{k\times k}$.
	Since $R\in {\mathcal S}_d$, there exists a random vector $Y=(Y_1,\ldots,Y_d)$ such that $Y_i\sim\mathrm{U}[-\sqrt{3},\sqrt{3}],~i=1,\ldots,d$ and the $Y$  has linear correlation matrix $R$.
Let $X= (Y_1,\ldots,Y_k)$ and $V=B^{-1}X$. Since $\E[Y]=0$ and $\var(Y_i)=1$ for $i=1,\ldots,d$, we know $\E[YY^\top]= R$ and hence
$
	\E[X X^\top]=BB^\T.
$
Therefore, $\E[V]=0$ and   $$\E[VV^{\T}]= B^{-1}\E[ XX^T](B^{-1})^{\T}= B^{-1}  B B^{\T} (B^{-1})^{\T}=I_k.$$

Note that $AV= A B^{-1} X$, and hence the first $k$ components of $AV$ are precisely those of $X$. It remains to verify the rest of the components in order to have $Y=AV$.
For $i=k+1,\ldots,d$, denote by $a_i$ the $i$-th row of $A$ as a (column) vector.
Note that $(R_{1i},\dots,R_{ki})= B a_i$. We can compute
\begin{align*}
\E\left[(Y_i- a_i^\T V)^2\right]
& =  \E[  Y_i^2  ] + \E\left[(a_i^\T V)^2  \right]-  2\E\left[Y_i  a_i^\T B^{-1} X \right]\\
&= 2 a^\T_i a _i-  2 a_i^\T B^{-1}\E\left[ X Y_i \right]\\
&= 2 a_i^\T a_i-  2 a_i^\T B^{-1} (R_{1i},\dots,R_{ki})\\
&= 2 a_i^\T a_i-  2 a_i^\T B^{-1}  B a_i   =0.
\end{align*}
Thus, $Y=AV$ almost surely, and the $k$-dimensional random vector $V$ satisfies the requirement in (b).
\end{proof}

Next we  review a crucial result on the property of extreme points of $\mathcal{P}_d$ shown by \cite{Ycart85} and \cite{GPW1990}. 
This result and Theorem \ref{main_theorem} give rise to a natural idea to build counter-examples for $d \ge 10$ as in Section \ref{sec:3}.
The logic below also reveals a route to the proof for the case $d\le 9$ as shown by \cite{Devr2015}.

\begin{lemma}[Ycart,1985; Grone et al.,1990]\label{lem_Ycart}
	There exist extreme points of rank $k$ in $\mathcal{P}_d$ if and only if $k(k+1)/2\le d$.
\end{lemma}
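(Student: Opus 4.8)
The plan is to establish Lemma~\ref{lem_Ycart} by bounding the rank of any extreme point of $\mathcal P_d$ in terms of $d$, and conversely by exhibiting extreme points of every admissible rank. The natural framework is to view $\mathcal P_d$ as a convex body inside the space of symmetric $d\times d$ matrices with zero diagonal, which has dimension $\binom d2$; the diagonal constraint $R_{ii}=1$ together with positive semidefiniteness cuts out $\mathcal P_d$. The key is the classical fact (Dubins' theorem / the theory of faces of the PSD cone): if $R\in\mathcal P_d$ has rank $k$ and $R=AA^\T$ with $A\in\mathcal A_{d,k}$ of full column rank, then $R$ lies in the face of the PSD cone consisting of matrices whose range is contained in $\operatorname{range}(A)$, and the affine hull of that face intersected with the affine subspace $\{R_{ii}=1\}$ has a dimension that can be computed explicitly.

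\textbf{Necessity of $k(k+1)/2\le d$.} First I would argue that $R$ is extreme in $\mathcal P_d$ only if the only symmetric matrix $H$ with zero diagonal and $\operatorname{range}(H)\subseteq\operatorname{range}(A)$ is $H=0$ — because any such $H$ of small enough norm gives $R\pm\epsilon H\in\mathcal P_d$, contradicting extremality. Writing $H=AMA^\T$ for a symmetric $k\times k$ matrix $M$ (possible since $A$ is column-full-rank), the zero-diagonal conditions read $a_i^\T M a_i=0$ for $i=1,\dots,d$, i.e.\ $\langle M, a_ia_i^\T\rangle=0$. The space of symmetric $k\times k$ matrices $M$ has dimension $k(k+1)/2$, and we are imposing $d$ linear conditions on it. If $d<k(k+1)/2$ there is a nonzero $M$ in the common kernel, producing a nonzero perturbation $H$, so $R$ is not extreme. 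Hence $k(k+1)/2\le d$ is necessary. (One must be slightly careful that $R\pm\epsilon H$ genuinely stays PSD with the right rank; this holds because $H=AMA^\T$ has range inside $\operatorname{range}(A)$, so for small $\epsilon$, $AA^\T\pm\epsilon AMA^\T = A(I_k\pm\epsilon M)A^\T$ is a small perturbation of the positive definite middle factor.)

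\textbf{Sufficiency.} For the converse I would construct, for each $k$ with $k(k+1)/2\le d$, an extreme point of $\mathcal P_d$ of rank exactly $k$. The idea is to choose unit vectors $a_1,\dots,a_d\in\R^k$ (spanning $\R^k$) such that the $d$ linear functionals $M\mapsto a_i^\T M a_i$ on the $k(k+1)/2$-dimensional space of symmetric matrices include a spanning subset — equivalently, the rank-one matrices $a_ia_i^\T$ span all symmetric $k\times k$ matrices. Such configurations exist because a generic choice of $\binom{k+1}2$ unit vectors already yields $\binom{k+1}2$ linearly independent rank-one symmetric matrices (this is a standard genericity/dimension-count fact, and one can cite \cite{Ycart85} or \cite{GPW1990} for the precise construction), and padding with extra unit vectors up to $d$ does no harm. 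Then $R=\sum$-free: simply set $R=AA^\T$ with $A$ the matrix of rows $a_i$. By the argument above, $R$ admits no nonzero zero-diagonal perturbation with range in $\operatorname{range}(A)=\R^k$, and since the face of $\mathcal P_d$ containing $R$ in its relative interior is exactly $\{AA^\T\}$ intersected with the diagonal constraint, $R$ is an extreme point; its rank is $k$ by construction.

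\textbf{Main obstacle.} The delicate point is the sufficiency direction — specifically, verifying that a configuration of unit vectors $a_1,\dots,a_d$ can be chosen so that $\{a_ia_i^\T\}$ spans the symmetric matrices \emph{and} the resulting $R$ is genuinely extreme rather than merely an exposed point or a point where the naive dimension count degenerates. One has to ensure the face structure is exactly as expected: that the minimal face of $\mathcal P_d$ containing $R$ equals $\{R' \in \mathcal P_d : \operatorname{range}(R')\subseteq\R^k\}$, and that this face is a single point under the zero-diagonal slicing. Since the excerpt explicitly attributes this to \cite{Ycart85} and \cite{GPW1990}, I would lean on their constructions for the genericity argument rather than reproving it from scratch, and focus my own write-up on the clean necessity argument and the reduction of sufficiency to the spanning property of rank-one matrices.
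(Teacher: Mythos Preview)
The paper does not prove this lemma at all; it is quoted as a known result from \cite{Ycart85} and \cite{GPW1990} and used as a black box. Your sketch is essentially the argument in those references: parametrize perturbations as $H=AMA^\T$ with $M$ symmetric $k\times k$, reduce extremality to the question of whether the $d$ linear forms $M\mapsto a_i^\T M a_i$ span the $\binom{k+1}{2}$-dimensional space of symmetric matrices, and then count dimensions for necessity and invoke a generic choice of unit vectors for sufficiency.

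One point you gloss over and should state explicitly: the restriction to perturbations $H$ with $\operatorname{range}(H)\subseteq\operatorname{range}(R)$ is not an assumption but a consequence. If $R\pm\epsilon H$ are both PSD and $Rv=0$, then $v^\T(R\pm\epsilon H)v=\pm\epsilon v^\T Hv\ge 0$ forces $v^\T Hv=0$, and since $R-\epsilon H\succeq 0$ with $v^\T(R-\epsilon H)v=0$ we get $(R-\epsilon H)v=0$, hence $Hv=0$. This is what justifies writing every admissible $H$ as $AMA^\T$; without it your necessity argument has a gap, since a priori one must rule out perturbations that leave the face. Once this is said, the proof is complete.
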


Since the set $\mathcal P_d$ is a convex compact subset in the vector space of symmetric $d\times d$ matrices which is identified with ${\mathbb R}^{d(d+1)/2}$,
according to the Krein-Milman Theorem, ${\mathcal P}_d$ is the convex hull of the set of all its extreme points, denoted as ${\mathcal E}_d$. Furthermore, in the light of Carath{\'e}odory's Theorem, any matrix in ${\mathcal P}_d$ can be written as a convex combination of at most $d(d+1)/2+1$ matrices in ${\mathcal E}_d$. Due to the linearity of the mapping from copulas to rank correlation matrices,
we need and only need to verify whether every matrix $R$ in ${\mathcal E}_d$ is a Spearman's rho matrix.
In case $d\le 9$, according to Lemma \ref{lem_Ycart}, all extreme points in $\mathcal E_d$ have rank at most 3.
If $d\geq 10$, the rank $k$ of a matrix $R\in{\mathcal E}_d$ can take the value $4$.
This fact is key to distinguish the cases $d\le 9 $ and $d\ge 10$ for the compatibility problem of Spearman's rho matrices.

Combining Lemma \ref{lem_Ycart} and Theorem \ref{main_theorem}, we arrive at the following equivalent condition for $\mathcal S_d=\mathcal P_d$.
Denote by $k_d=\max\{k\in \mathbb N: k(k+1)/2 \le d\}$.

\begin{theorem}\label{th:2}
$\mathcal S_d=\mathcal P_d$ if and only if for every $A\in \mathcal A_{d, k_d}$, there exists a $k_d$-dimensional random vector  $V$ with mean vector $0$ and covariance matrix $I_{k_d}$ such that marginal distributions of $AV$ are all $\mathrm U[-\sqrt{3},\sqrt{3}]$.
\end{theorem}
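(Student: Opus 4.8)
The plan is to obtain both implications from Theorem~\ref{main_theorem}, using Lemma~\ref{lem_Ycart} to bound the ranks involved and supplying an elementary linear-algebra step to reconcile a rank-$k$ object with the fixed width $k_d$. Recall that Lemma~\ref{lem_Ycart} guarantees $\mathrm{rank}(R)\le k_d$ for every $R\in\mathcal E_d$, whereas Theorem~\ref{main_theorem} speaks of a rank decomposition $A\in\R^{d\times k}$ whose width $k$ is exactly $\mathrm{rank}(R)$; here we must instead handle matrices $A\in\mathcal A_{d,k_d}$ that may be rank-deficient. The bridge is zero-column padding (for the ``if'' part) and orthogonal completion of a row-orthonormal matrix (for the ``only if'' part).

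\emph{The ``if'' direction.} Assume the stated condition. Since $\mathcal S_d$ is convex (Proposition~\ref{prop:1}) and $\mathcal P_d$ is the convex hull of $\mathcal E_d$ (Krein--Milman theorem), it suffices to prove $\mathcal E_d\subseteq\mathcal S_d$. Fix $R\in\mathcal E_d$ with $k:=\mathrm{rank}(R)\le k_d$, and take a rank decomposition $\widetilde A\in\mathcal A_{d,k}$ of $R$. Let $A=[\,\widetilde A\mid 0\,]\in\R^{d\times k_d}$ be $\widetilde A$ augmented by $k_d-k$ zero columns; each row of $A$ is again a unit vector, so $A\in\mathcal A_{d,k_d}$, and $AA^\T=\widetilde A\widetilde A^\T=R$. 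Applying the hypothesis to $A$ gives a $k_d$-dimensional $V$ with $\E[V]=0$, $\E[VV^\T]=I_{k_d}$, and all margins of $AV$ equal to $\mathrm U[-\sqrt{3},\sqrt{3}]$. Then $\E[(AV)(AV)^\T]=AA^\T=R$, and since each coordinate of $AV$ has variance $1$ while Spearman's rho agrees with Pearson's correlation for uniform margins (both being invariant under increasing affine maps), the Spearman's rho matrix of $AV$ is exactly $R$; hence $R\in\mathcal S_d$. Thus $\mathcal E_d\subseteq\mathcal S_d$, and $\mathcal S_d=\mathcal P_d$ follows. (This is more direct than the variant requiring $A$ of full rank $k_d$, which would need an additional perturbation-and-closedness step to reach the rank-deficient extreme points.)

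\emph{The ``only if'' direction.} Assume $\mathcal S_d=\mathcal P_d$ and fix $A\in\mathcal A_{d,k_d}$; set $R:=AA^\T\in\mathcal P_d=\mathcal S_d$ and $k:=\mathrm{rank}(A)=\mathrm{rank}(R)$. If $k=k_d$, then $A$ is a rank decomposition of $R$ and Theorem~\ref{main_theorem}(a)$\Rightarrow$(b) yields the desired $V$ immediately. If $k<k_d$, pick a rank decomposition $A'\in\R^{d\times k}$ of $R$; since the columns of $A$ span the column space of $R$, which coincides with that of $A'$, we may write $A=A'C$ for some $C\in\R^{k\times k_d}$, and comparing $A'CC^\T A'^\T=R=A'A'^\T$ with $A'$ of full column rank gives $CC^\T=I_k$. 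Theorem~\ref{main_theorem}(a)$\Rightarrow$(b) provides a $k$-dimensional $V'$ with mean $0$, covariance $I_k$, and all margins of $A'V'$ equal to $\mathrm U[-\sqrt{3},\sqrt{3}]$. Adjoin rows $D\in\R^{(k_d-k)\times k_d}$ to $C$ so that the stacked matrix is orthogonal (equivalently $DD^\T=I_{k_d-k}$ and $CD^\T=0$), take $W$ a $(k_d-k)$-dimensional vector with mean $0$ and covariance $I_{k_d-k}$ independent of $V'$, and set $V:=C^\T V'+D^\T W$. Then $\E[V]=0$; $\E[VV^\T]=C^\T C+D^\T D=I_{k_d}$, the cross terms vanishing by independence and the sum being $I_{k_d}$ since the stacked matrix is orthogonal; and $CV=V'$, so $AV=A'CV=A'V'$ has all margins $\mathrm U[-\sqrt{3},\sqrt{3}]$. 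Hence $V$ meets the requirement.

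\emph{Main obstacle.} There is no deep difficulty: the theorem is essentially a repackaging of Theorem~\ref{main_theorem} and Lemma~\ref{lem_Ycart}. The single point that calls for attention is the rank-deficient case of the ``only if'' direction, where one must build a $k_d$-dimensional $V$ out of the $k$-dimensional $V'$ supplied by Theorem~\ref{main_theorem}; the orthogonal completion of $C$ together with an independent ``filler'' block $W$ does this, and I would double-check that this $W$ makes $\E[VV^\T]=I_{k_d}$ and that zero-padding genuinely keeps $A$ in $\mathcal A_{d,k_d}$ with $AA^\T$ unchanged.
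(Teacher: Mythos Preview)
Your proof is correct and follows essentially the same approach as the paper. Both directions use zero-column padding in the ``if'' part and an orthogonal completion in the ``only if'' part; your factorization $A=A'C$ with $CC^\T=I_k$ and the completion to $\begin{pmatrix}C\\D\end{pmatrix}$ orthogonal is exactly the paper's rotation by an orthogonal matrix $Q$ (with $\bar A=A'$, $C=Q_1^\T$, $D=Q_2^\T$, and $V=Q\hat V$), and your independent filler $W$ plays the role of the paper's $\mathrm N(0,I_{k_d-k})$ block.
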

\begin{proof}
For   $k\in \mathbb N$ and $A\in \mathcal A_{d,k}$, denote by $\mathcal V(A)$ the set of $k$-dimensional random vectors  with mean vector $0$ and covariance matrix $I_{k}$ such that marginal distributions of $AV$ are all $\mathrm U[-\sqrt{3},\sqrt{3}]$.
\begin{enumerate}[(i)]
\item \underline{The ``if" statement.}
Suppose that $\mathcal V(A)$ is non-empty for every $A\in \mathcal A_{d, k_d}$.
As we have seen from Lemma \ref{lem_Ycart}, every $R\in \mathcal E_d$ has rank at most $k_d$.
Denote by $k$ the rank of $R$ and write $R$ as  $R=AA^\T$ for some $A \in \mathcal A_{d,{k}}$ of rank $k$, then $k\le k_d$.
If $k=k_d$, since $\mathcal V(A)$ is non-empty, by Theorem \ref{main_theorem} ``(c)$\Rightarrow$(a)", we know that $R\in \mathcal S_d$.
If $k<k_d$, then we can take $B=(b_{ij})_{d\times k_d}$ where $(b_{ij})_{d\times k}=A$ and $b_{ij}=0$ for $j >k$ and $i=1,\dots,d$.
Note that $\mathcal V(B)$ is non-empty. Take $V=(V_1,\dots,V_{k_d})\in \mathcal V(B)$.
Since  $BV=A (V_1,\dots,V_k)$, we have $(V_1,\dots,V_k)\in \mathcal V(A)$.
Therefore, $\mathcal V(A)$ is also non-empty, and by Theorem \ref{main_theorem} ``(c)$\Rightarrow$(a)", we know that $R\in \mathcal S_d$.
In both cases, $R\in \mathcal S_d$, which further implies  $\mathcal E_d\subset \mathcal S_d$. From the fact that $\mathcal S_d$ is convex (Proposition \ref{prop:1}), we have  $\mathcal S_d=\mathcal P_d$.

\item \underline{The ``only-if" statement.}
Suppose that $\mathcal S_d=\mathcal P_d$.
Take any $A\in \mathcal A_{d,k_d}$, let $k$ be the rank of $A$ and  $R=AA^\T$. Clearly $R\in \mathcal P_d = \mathcal S_d$.
If $k=k_d$,
by Theorem \ref{main_theorem} ``(a)$\Rightarrow$(b)", $\mathcal V(A)$ is non-empty.
If $k<k_d$, 
then all row vectors of $A$ belong to a $k$-dimensional subspace of $\mathbb{R}^{k_{d}}$. Thus there exists a rotation transformation in $\mathbb{R}^{k_d}$ which can be denoted as a $k_{d}\times k_d$ orthogonal matrix $Q$, such that each row of $AQ$ has the last $k_d-k$ entries  0.
Define a $d\times k$ matrix $\bar A$ from the first $k$ columns of $AQ$. It follows that $\bar A\bar A^\T= A QQ^\T   A^\T = AA^\T =R \in \mathcal S_d$.
Then, by Theorem \ref{main_theorem} ``(a)$\Rightarrow$(b)",  
there exists a $k$-dimensional random vector $\bar V\in \mathcal V(\bar A)$.
Let $\hat V$ be a $k_d$-dimensional random vector, with its first $k$ components equal to those of $\bar V$, and the rest components follow a $\mathrm N(0,I_{k_d-k})$ distribution and independent of $\bar V$.
Define $V=Q\hat V$, then $V$ has mean vector 0 and covariance matrix $I_{k_d}$.
Since all entries of $AQ$ other than the first $k$ columns are zero, we have
$ AV  =A  Q \hat V=  \bar A\bar V.$
Therefore, all margins of $A V$ are $\mathrm U[-\sqrt{3},\sqrt{3}]$. Hence $V\in \mathcal V(A)$. 
  \end{enumerate}
\end{proof}

Theorem \ref{th:2} together with the following simple result leads to the positive answer of $\mathcal S_d=\mathcal P_d$ for $d\le 9$.

\begin{lemma}[The Archimedes Theorem]\label{lem:3}
Let $V$ be uniformly distributed over the  unit sphere in $\mathbb R^3$. For all $d\in \mathbb N$ and $A\in \mathcal A_{d,3}$, the marginal distributions of $AV$ are all  $\mathrm U[-1,1]$.
\end{lemma}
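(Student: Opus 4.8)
The plan is to reduce the $d$-dimensional statement to the single claim that each individual coordinate of a uniformly random point on the unit sphere $S^2\subset\mathbb R^3$ is uniform on $[-1,1]$, and then exploit rotational invariance to handle an arbitrary unit row vector. First I would observe that, since the marginal distributions of $AV$ are determined one component at a time, and since the $i$-th component of $AV$ is $a_i^\T V$ where $a_i$ is the $i$-th row of $A$ (a unit vector in $\mathbb R^3$ by the definition of $\mathcal A_{d,3}$), it suffices to prove: for every fixed unit vector $u\in\mathbb R^3$, one has $u^\T V\sim \mathrm U[-1,1]$ when $V$ is uniform on $S^2$. This dispenses with $d$ entirely and with the matrix structure.

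Next I would invoke the rotational invariance of the uniform distribution on $S^2$: there is an orthogonal matrix $Q$ with $Q e_3 = u$ (where $e_3=(0,0,1)^\T$), and $QV \laweq V$, so $u^\T V = (Qe_3)^\T V = e_3^\T(Q^\T V)\laweq e_3^\T V = V_3$, the third coordinate of $V$. Hence it is enough to show the third coordinate of a uniform point on $S^2$ is $\mathrm U[-1,1]$. This is the classical Archimedes projection result: the map from the sphere to the circumscribed cylinder that projects radially onto the axis preserves surface area, so the pushforward of the normalized surface measure onto the axis coordinate is Lebesgue measure rescaled to $[-1,1]$. I would justify it either by citing the area-preserving property of the Archimedes map, or by a direct computation: parametrizing $S^2$ by $(\sqrt{1-t^2}\cos\phi,\sqrt{1-t^2}\sin\phi,t)$ with $t\in[-1,1]$, $\phi\in[0,2\pi)$, the surface element is $\d t\,\d\phi$ (the $\sqrt{1-t^2}$ factors cancel between the Jacobian and the metric), so $t$ is uniform on $[-1,1]$.

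Finally I would assemble the pieces: each component $a_i^\T V$ equals in distribution the third coordinate of a uniform point on $S^2$, hence is $\mathrm U[-1,1]$, which is exactly the claim. The argument involves no real obstacle; the only point requiring a little care is the surface-element computation (or the citation of Archimedes' hat-box theorem), and the observation that we only need the one-dimensional marginals, not the joint law of $AV$, so no joint-distribution issues arise.
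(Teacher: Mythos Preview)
Your proposal is correct and follows essentially the same approach as the paper, which simply notes that the uniform distribution on the unit sphere in $\mathbb R^3$ has $\mathrm U[-1,1]$ marginals and is rotation-invariant, hence has uniform projections in every direction. Your write-up merely supplies the details (the reduction to a single unit vector and the surface-element computation) that the paper leaves implicit.
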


Lemma \ref{lem:3} follows from the simple fact that the uniform distribution on the three dimensional unit sphere has $\mathrm{U}[-1,1]$ marginal distributions, and since it is invariant under rotations, it has uniform projections in all directions.
This statement is not true for dimensions other than $3$.

For the case $d \ge 10$, 
we have $k_d\geq 4$ and it is unclear whether for every $A\in\mathcal A_{d,k_d}$, the random vector $V$ still exists satisfying the condition in Theorem \ref{th:2}. In the next section, we give a counter-example for dimension 12.
Using Theorem \ref{th:2}, this will lead to a negative answer to the main question for $d\ge 12$.


\section{A negative answer for dimension 12 or higher}\label{sec:3}

As explained above, ${\mathcal S}_d = {\mathcal P}_d$   for $d\le 9$.
It might be natural to guess that the same condition does not hold for $d\ge 10$ since there may be matrices of rank more than $3$ in $\mathcal E_d$, and the Archimedes Theorem only works in $\mathbb{R}^3$.
Thus, we hope to show ${\mathcal S}_d \neq {\mathcal P}_d$ for $d\ge 10$.
Based on Theorem \ref{th:2},   it suffices to
 construct a matrix $A\in \mathcal A_{d, 4}$, such that there does not exist a $4$-dimensional random vector $V$ satisfying all margins of $AV$ are uniform on $[-\sqrt 3,\sqrt 3]$.
 Below we give an explicit construction of such a matrix for $d=12$.

\begin{example} We specify 12 unit   vectors $a_1,\ldots,a_{12}$ in ${\mathbb R}^4$ as
\begin{equation*}\arraycolsep=2pt\def\arraystretch{1.4}
\begin{array}{llll}
a_1=(1,0,0,0), & a_2=(0,1,0,0),& a_3=(0,0,1,0),& a_4=(0,0,0,1),\\
a_5=\frac12\left(1,1,1,1\right),& a_6=\frac12\left(1,1,1,-1\right),& a_7=\frac12\left(1,1,-1,1\right),& a_8=\frac12\left(1,1,-1,-1\right),\\
a_9=\frac12\left(1,-1,1,1\right),& a_{10}=\frac12\left(1,-1,1,-1\right),& a_{11}=\frac12\left(1,-1,-1,1\right),& a_{12}=\frac12\left(1,-1,-1,-1\right).
\end{array}
\end{equation*}
We assert that there does not exist any random vector $V=(V_1,V_2,V_3,V_4)$ such that
$a_k^\T V\sim\mathrm U[-\sqrt{3},\sqrt{3}]$ for all $k=1,\ldots,12$. If our assertion is true,  Theorem  \ref{th:2} leads to
${\mathcal S}_{12}\neq {\mathcal P}_{12}$.
More precisely, using Theorem \ref{main_theorem},
 the $12\times 12$ linear correlation matrix $M$ defined by
$
M  = (a_i^\T a_j)_{12\times 12}
$
is not a Spearman's rho matrix, i.e. $M\in{\mathcal P}_{12}$ and $ M\notin {\mathcal S}_{12}$.  This matrix $M$ has rank 4 and we provide its explicit form, where $\alpha=1/2$:
\begin{equation*}
M=
\left(
\begin{array}{cccccccccccc}
1 & 0 & 0 & 0 & \alpha & \alpha & \alpha & \alpha & \alpha & \alpha & \alpha & \alpha\\
0 & 1 & 0 & 0 & \alpha & \alpha & \alpha & \alpha & -\alpha & -\alpha & -\alpha & -\alpha\\
0 & 0 & 1 & 0 & \alpha & \alpha & -\alpha & -\alpha & \alpha & \alpha & -\alpha & -\alpha\\
0 & 0 & 0 & 1 & \alpha & -\alpha & \alpha & -\alpha & \alpha & -\alpha & \alpha & -\alpha\\
\alpha & \alpha & \alpha & \alpha & 1 & \alpha & \alpha & 0 & \alpha & 0 & 0 & -\alpha\\
\alpha & \alpha & \alpha & -\alpha & \alpha & 1 & 0 & \alpha & 0 & \alpha & -\alpha & 0\\
\alpha & \alpha & -\alpha & \alpha & \alpha & 0 & 1 & \alpha & 0 & -\alpha & \alpha & 0\\
\alpha & \alpha & -\alpha & -\alpha & 0 & \alpha & \alpha & 1 & -\alpha & 0 & 0 & \alpha\\
\alpha & -\alpha & \alpha & \alpha & \alpha & 0 & 0 & -\alpha & 1 & \alpha & \alpha & 0\\
\alpha & -\alpha & \alpha & -\alpha & 0 & \alpha & -\alpha & 0 & \alpha & 1 & 0 & \alpha\\
\alpha & -\alpha & -\alpha & \alpha & 0 & -\alpha & \alpha & 0 & \alpha & 0 & 1 & \alpha\\
\alpha & -\alpha & -\alpha & -\alpha & -\alpha & 0 & 0 & \alpha & 0 & \alpha & \alpha & 1\\
\end{array}
\right).
\end{equation*}

Next we need to prove the above assertion. For the purpose of contradiction, we assume there exists a random vector $V=(V_1,V_2,V_3,V_4)$ such that $a_k^\T V\sim \mathrm U[-\sqrt{3},\sqrt{3}], ~k=1,\ldots,12$.
Due to our specific choices of $a_1,\ldots,a_{12}$, via some algebraic calculation cancelling all terms with odd powers, we obtain
\begin{equation}
\left\{
\begin{aligned}
(a_1^\T V)^2+\cdots+(a_{12}^\T V)^2&=3 \left\|V\right\|^2,\\
(a_1^\T V)^4+\cdots+(a_{12}^\T V)^4&=\frac{3}{2} \left\|V\right\|^4,
\end{aligned}
\right. \label{eq:keycons}
\end{equation}
where $\left\|V\right\|^2=V_1^2+V_2^2+V_3^2+V_4^2$. Taking expectations on both sides of the above two equations and
combining with $a_k^{\T} V\sim \mathrm U[-\sqrt{3},\sqrt{3}],~k=1,\ldots,12$,
we obtain
\begin{equation}
\left\{
\begin{aligned}
12\times 1=&\sum_{k=1}^{12}\E[(a_k^\T V)^2]=\E\left[3 \left\|V\right\|^2\right],\\
12 \times \frac{9}{5}=&\sum_{k=1}^{12}\E[(a_k^\T V)^4]=\E\left[\frac{3}{2} \left\|V\right\|^4\right],
\end{aligned}
\right. \label{eq:keycons2}
\end{equation}
which leads to
\begin{equation*}
\E[\left\|V\right\|^2]=4~~\mbox{and}~~\E[\left\|V\right\|^4]=\frac{72}{5}.
\end{equation*}
Therefore, we have $(\E[\left\|V\right\|^2])^2 = 16 > 72/5 =  \E[\left\|V\right\|^4],$ which contradicts the H{\"o}lder inequality.
Hence, no such random vector $V$ may exist.\label{ex:1}
\end{example}

We summarize the finding in Example \ref{ex:1} and the case $d\ge 12$ in the following proposition.

\begin{proposition}\label{main_proposition}
	For $d\geq 12$, ${\mathcal S}_d \neq {\mathcal P}_d$.
\end{proposition}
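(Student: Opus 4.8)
The plan is to deduce the general statement from the explicit $12$-dimensional obstruction already constructed in Example \ref{ex:1}. That example exhibits a matrix $M\in\mathcal P_{12}$ with $M\notin\mathcal S_{12}$, so the case $d=12$ is already settled. For $d>12$ I would pad $M$ with an identity block: set
\begin{equation*}
M_d=\begin{pmatrix} M & 0 \\ 0 & I_{d-12}\end{pmatrix}\in\R^{d\times d}.
\end{equation*}
First I would check that $M_d\in\mathcal P_d$: it is symmetric; its diagonal entries are all $1$ (those of $M$ are $1$ since the vectors $a_1,\dots,a_{12}$ of Example \ref{ex:1} are unit vectors, and those of $I_{d-12}$ are $1$); and it is positive semi-definite, being the direct sum of the positive semi-definite matrices $M=AA^\T$ and $I_{d-12}$.

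The key step is to show $M_d\notin\mathcal S_d$. Suppose for contradiction that $M_d=R(X)$ for some $d$-dimensional random vector $X=(X_1,\dots,X_d)$ with continuous marginals. Then the sub-vector $(X_1,\dots,X_{12})$ also has continuous marginals, and its Spearman's rho matrix is, directly from the definition, the principal submatrix of $M_d$ indexed by $\{1,\dots,12\}$, which is exactly $M$. Hence $M\in\mathcal S_{12}$, contradicting Example \ref{ex:1}. Therefore $M_d\in\mathcal P_d\setminus\mathcal S_d$, so $\mathcal S_d\neq\mathcal P_d$ for every $d\ge 12$.

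I do not anticipate a genuine obstacle here: the only ingredient beyond Example \ref{ex:1} is the elementary observation that $\mathcal S_d$ is stable under passing to principal submatrices (equivalently, the Spearman's rho matrix of a sub-vector of $X$ is the corresponding submatrix of $R(X)$), which is immediate. An alternative, slightly heavier route would invoke Theorem \ref{main_theorem} directly: writing $M_d=BB^\T$ with $B=\diag(A,I_{d-12})\in\mathcal A_{d, d-8}$, any random vector realizing condition (b) of Theorem \ref{main_theorem} for $B$ would, upon restriction to its first four coordinates (whose covariance matrix is the corresponding principal submatrix $I_4$ of $I_{d-8}$), realize that condition for $A$, again contradicting Example \ref{ex:1}. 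Since the submatrix argument above needs nothing beyond the definitions, it is preferable.
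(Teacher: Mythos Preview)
Your proposal is correct and follows essentially the same approach as the paper: pad $M$ with an identity block to form $M_d$, note $M_d\in\mathcal P_d$, and observe that any $d$-vector with Spearman's rho matrix $M_d$ would yield a $12$-dimensional sub-vector with Spearman's rho matrix $M$, contradicting Example~\ref{ex:1}. Your write-up is slightly more detailed (and your alternative route via Theorem~\ref{main_theorem} is unnecessary but also valid), but the argument is the same.
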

\begin{proof}[Proof of Proposition~\ref{main_proposition}]
The case $d=12$ is precisely addressed by Example \ref{ex:1}. For $d>12$, one can simply take the $d\times d$ matrix
\begin{equation*}
M_d=
\left(
\begin{array}{cc }
M & 0  \\ 0 & I_{d-12}
\end{array}
\right).
\end{equation*}
It is obvious that $M_d\in \mathcal P_d$ since $M\in \mathcal P_d$.
If a $d$-dimensional random vector  has $M_d$ as its Spearman's rho matrix,
its first 12-dimensional marginal random vector would have Spearman's rho matrix $M$.
As  this is not possible, we know $\mathcal S_d\ne \mathcal P_d$ for $d\ge 12$.

\end{proof}

Based on Example \ref{ex:1}, we also obtain a sharp contrast to Lemma \ref{lem:3}, summarized in the following proposition.

\begin{proposition}\label{prop:2}
For any $d\in \mathbb N$, there does not exist a $4$-dimensional random vector $V$ such that for all $A\in \mathcal A_{d,4}$, the marginal distributions of $AV$ are all $\mathrm U[-1,1]$.
\end{proposition}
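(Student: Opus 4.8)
The plan is to leverage Example \ref{ex:1} directly, since Proposition \ref{prop:2} is essentially a rescaling of the assertion proved there. Suppose, for contradiction, that for some $d\in\mathbb N$ there exists a $4$-dimensional random vector $V$ such that $AV$ has all $\mathrm U[-1,1]$ marginals for \emph{every} $A\in\mathcal A_{d,4}$. First I would reduce to $d=12$: if $d<12$ the claim is vacuous or follows by padding, and if $d\ge 12$ then any such universal $V$ would in particular work for the specific $12\times 4$ matrix $A_0$ whose rows are the vectors $a_1,\ldots,a_{12}$ of Example \ref{ex:1} (extended trivially, e.g.\ by repeating rows, to get a $d\times 4$ matrix in $\mathcal A_{d,4}$ if $d>12$; the first $12$ projections are then exactly $a_k^\T V$). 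So it suffices to rule out a $4$-dimensional $V$ with $a_k^\T V\sim \mathrm U[-1,1]$ for $k=1,\ldots,12$.

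Next I would run the moment computation of Example \ref{ex:1} with the interval $[-1,1]$ in place of $[-\sqrt 3,\sqrt 3]$. The two algebraic identities in \eqref{eq:keycons} depend only on the choice of the $a_k$, not on the distribution of $V$, so they still hold: $\sum_{k=1}^{12}(a_k^\T V)^2=3\|V\|^2$ and $\sum_{k=1}^{12}(a_k^\T V)^4=\frac32\|V\|^4$. Taking expectations and using that a $\mathrm U[-1,1]$ variable has second moment $1/3$ and fourth moment $1/5$, one gets $\E[\|V\|^2]=\frac{12}{3}\cdot\frac13=\frac{4}{3}$ and $\frac32\E[\|V\|^4]=\frac{12}{5}$, hence $\E[\|V\|^4]=\frac{8}{5}$. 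Then $(\E[\|V\|^2])^2=\frac{16}{9}$ while $\E[\|V\|^4]=\frac{8}{5}$, and $\frac{16}{9}=1.77\ldots>1.6=\frac{8}{5}$, contradicting the Cauchy--Schwarz (Hölder) inequality $(\E[\|V\|^2])^2\le \E[\|V\|^4]$. This contradiction finishes the proof.

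There is no real obstacle here; the only point requiring a line of care is the reduction step for $d\neq 12$: for $d<4$ note $\mathcal A_{d,4}$ matrices still have unit rows, but one cannot realize all twelve directions, so actually the cleanest phrasing is to observe that a universal $V$ for dimension $d$ would restrict to a universal $V$ for any smaller dimension as well (by selecting submatrices of rows), and conversely any $A\in\mathcal A_{12,4}$ embeds into $\mathcal A_{d,4}$ for $d\ge 12$; hence the statement for general $d$ follows from the $d=12$ case, which is what Example \ref{ex:1} handles after the trivial rescaling of the target interval. I would therefore present the proof as: ``The argument of Example \ref{ex:1}, with $\mathrm U[-1,1]$ replacing $\mathrm U[-\sqrt3,\sqrt3]$ throughout, shows no $4$-dimensional $V$ can have $a_k^\T V\sim\mathrm U[-1,1]$ for all $k=1,\ldots,12$; since the rows $a_1,\ldots,a_{12}$ can be taken (possibly with repetition) as rows of a matrix in $\mathcal A_{d,4}$ for every $d\ge 12$, and the case $d<12$ reduces to a subcollection, no such universal $V$ exists for any $d\in\mathbb N$,'' and then spell out the two displayed moment equations and the Hölder contradiction.
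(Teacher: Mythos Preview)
Your approach is essentially the paper's own: the proposition is introduced as following from Example~\ref{ex:1}, and your rescaled moment computation (second moment $1/3$, fourth moment $1/5$, leading to $(\E[\|V\|^2])^2=16/9>8/5=\E[\|V\|^4]$) is exactly the intended argument.

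One point that needs cleaning up is your reduction in $d$. The phrase ``the case $d<12$ reduces to a subcollection'' is not what you want: a proper subcollection of the twelve directions would not yield the identities \eqref{eq:keycons}, so no contradiction would follow. The clean observation, which dissolves the issue, is that for \emph{every} $d\ge 1$ the hypothesis ``$AV$ has $\mathrm U[-1,1]$ margins for all $A\in\mathcal A_{d,4}$'' is equivalent to ``$a^\T V\sim\mathrm U[-1,1]$ for every unit vector $a\in\mathbb R^4$'', simply because any unit vector occurs as a row of some matrix in $\mathcal A_{d,4}$. This makes the statement independent of $d$ and delivers all twelve projections $a_k^\T V$ simultaneously, regardless of whether $d$ is above or below $12$. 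The paper also records a second route via this same reformulation: the condition forces $V$ to be spherically distributed, and one then appeals to the known fact that the uniform distribution is not a one-dimensional margin of any $4$-dimensional spherical law.
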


If  the marginal distributions of $AV$ are all $\mathrm U[-1,1]$ for all $A\in \mathcal A_{d,4}$, then $V$ is necessarily spherically distributed. 
Therefore, Proposition \ref{prop:2} is also obtained from the fact that the uniform distribution is not in the class of margins of $4$-dimensional spherical distributions (see Section 4.9 of \cite{J97}).

We remark that Proposition \ref{prop:2} on its own does not imply a negative answer to the question of whether $\mathcal S_d=\mathcal P_d$ for $d\ge 10$.
According to Theorem \ref{th:2}, in order for  $\mathcal S_d=\mathcal P_d$, $10\le d\le 14$, it suffices if there exists such $V$ for each $A\in \mathcal A_{d,4}$. We do not quire the existence of such a random vector $V$ that works for all $A$, although this is possible if $V$ is $3$-dimensional and $A$ is chosen from $\mathcal A_{d,3}$.
 Therefore, a separate investigation for each case of $d=10,11,\dots,$ is required to reach the conclusion that $\mathcal S_d\ne \mathcal P_d$.

\section{Discussion}

Our argument for dimension $d=12$ relies very precisely on the symmetric choice of the vectors $a_1,\ldots,a_{12}$ in Example \ref{ex:1}.
The 12 vectors are chosen such that the
24 vectors $\pm a_1,\ldots,\pm a_{12}$  approximately uniformly spread out over the unit sphere in $\mathbb R^4$.
This construction is essential to arrive to equations \eqref{eq:keycons}-\eqref{eq:keycons2}.
In  \eqref{eq:keycons2}, both the left-hand side and the right-hand side can be written as   moments of a single random variable $||V||$ without explicitly relying on its components or its dependence structure.
This step finally leads to the desired  contradiction $(\E[\left\|V\right\|^2])^2  >    \E[\left\|V\right\|^4].$

However, in dimension $d=10$ or $11$, such symmetry does not exist.  After many attempts, we were not able to find a way to construct   vectors $a_1,\ldots,a_{d}$
so that an equation similar to \eqref{eq:keycons}-\eqref{eq:keycons2} can be found for these cases. Recall that in  \eqref{eq:keycons}-\eqref{eq:keycons2}, we need to cancel all terms with odd powers to arrive to the desirable conclusion, which is  very restrictive on the symmetry of $a_1,\ldots,a_d$.

In view of Proposition \ref{prop:2}, we would naturally guess  ${\mathcal S}_d\neq{\mathcal P}_d$ for $d=10$ and $d=11$. We are not aware of a counter-example in these cases.

  In the case $d\le 9$,
although we know $\mathcal S_d=\mathcal P_d$,
that is, for a given $R\in\mathcal P_d$, there exists a random vector $Y$ that has Spearman's rho matrix $R$,
it is yet unclear how to  find such $Y$, its corresponding copula model, or how to simulate from this model.
On the other hand, for any linear correlation matrix $R\in \mathcal P_d$ of rank at most $3$, one can directly use a decomposition $R=AA^\T$ for some $A\in \mathcal A_{d,3}$ 
and a uniform random vector $V$ on the unit sphere in $\mathbb R^3$. By Lemma \ref{lem:3} and simple calculation, 
$AV$ has the rank correlation matrix $R$. 
If $R$ is a convex combination of matrices $R_i$, $i=1,\dots,n$, each with rank at most $3$, 
that is, \begin{equation}\label{eq:conv} R=\sum_{i=1}^n a_i R_i \end{equation} 
for some $a_1,\dots,a_n\ge 0$, $\sum_{i=1}^n a_i=1$, 
then a copula model with rank correlation matrix $R$ can be easily constructed by a convex combination of the copulas corresponding to $R_1,\dots,R_n$. 
For a given matrix $R$, however, it is unclear how to find the corresponding convex combination \eqref{eq:conv}, even if we know such a combination always exists for $d\le 9$.
A practical alternative method (\cite{IC82}, \cite{EMS02}) to address this problem  is to use a Gaussian copula with the correlation matrix parameter $R$ to approximate this model, and the relative error is known to be at most $(\pi-3)/\pi$ (see e.g.~Section 6.2 of \cite{EMS02});  this method does not give a model with precisely the  Spearman's rho matrix $R$.
For the case of $d\ge 10$, it seems even more difficult to construct a  copula model with a given Spearman's rho matrix $R$, even assuming such a model exists.



\section*{Acknowledgement}
 R.~Wang   acknowledges financial support from the Natural Sciences and Engineering Research Council of Canada (NSERC, RGPIN-2018-03823, RGPAS-2018-522590).
 Y.~Wang    acknowledges financial support from the National Natural Science Foundation of China (Grant No. 11671021).





\begin{thebibliography}{7}
\expandafter\ifx\csname natexlab\endcsname\relax\def\natexlab#1{#1}\fi

\bibitem[\protect\citeauthoryear{Chaganty \& Joe}{2006}]{CJ06} 
\textsc{Chaganty,  N. R.} \& \textsc{Joe, H.} (2006). Range of correlation matrices for dependent
  Bernoulli random variables. \emph{Biometrika}, \textbf{93}(1), 197--206.


\bibitem[{Devroye \& Letac(2010)}]{Devr2010}
\textsc{Devroye, L.} \& \textsc{Letac, G.} (2010).
\newblock {Copulas in three dimensions with prescribed correlations}.
\newblock {arXiv:1004.3146}.



\bibitem[{Devroye \& Letac(2015)}]{Devr2015}
\textsc{Devroye, L.} \& \textsc{Letac, G.} (2015).
\newblock {Copulas with prescribed correlation matrix}.
\newblock {In \textit{In Memoriam Marc Yor-S{\'e}minaire de Probabilit{\'e}s XLVII}, Donati-Martin C., Lejay A. \& Rouault A., eds. Switzerland: Springer, pp. 585--601}.



\bibitem[\protect\citeauthoryear{Embrechts et al.}{Embrechts et al.}{2016}]{EHW16}
\textsc{Embrechts, P.}, \textsc{Hofert, M.}  \& \textsc{Wang, R.} (2016). Bernoulli and tail-dependence compatibility. \emph{Annals of Applied Probability}, \textbf{26}(3), 1636--1658.



\bibitem[{Embrechts et al.(2002)}]{EMS02}
\textsc{Embrechts, P.}, \textsc{McNeil, A.} \& \textsc{Straumann, D.} (2002).
\newblock Correlation and dependence in risk management: properties and pitfalls.
\newblock In \textit{Risk Management: Value at Risk and Beyond}, Dempster M., eds. U.S.A.: Cambridge University Press, pp. 176--223.

\bibitem[{Grone et al.(1990)}]{GPW1990}
\textsc{Grone, R.}, \textsc{Pierce, S.} \& \textsc{Watkins, W.} (1990). Extremal correlation matrices. \emph {Linear Algebra \& Its Applications}, \textbf{134}, 63--70.

\bibitem[{Iman \& Conover(1982)}]{IC82}
\textsc{Iman, R. L.} \& \textsc{Conover, W. J.} (1982). A distribution-free approach to inducing rank correlation among input variables. \emph{Communications in Statistics-Simulation and Computation}, \textbf{11}(3), 311--334.

\bibitem[{Joe(1997)}]{J97}
\textsc{Joe, H.} (1997).
\newblock {\em Multivariate Models and Dependence Concepts}.
\newblock London: Chapman \& Hall.


\bibitem[{Joe(2006)}]{hj2006}
\textsc{Joe, H.} (2006).
\newblock Range of correlation matrices for dependent random variables with given marginal distributions.
\newblock In \textit{Advances in Distribution Theory, Order Statistics, and Inference}, Balakrishnan N., Castillo E., \& Salegria J. M., eds. U.S.A.: Birkh{\"a}user Boston, pp. 125--142.



\bibitem[\protect\citeauthoryear{Joe}{Joe}{2014}]{J14}
\textsc{Joe, H.} (2014).
{\em Dependence Modeling with Copulas}.
London: Chapman \& Hall.


\bibitem[{Kurowicka \& Cooke(2006)}]{Kuro-cooke2006}
\textsc{Kurowicka, D.} \& \textsc{Cooke, R. M.} (2006).
\newblock  {\em Uncertainty Analysis with High Dimensional Dependence Modelling}, U.K.: John Wiley \& Sons.


\bibitem[{McNeil et al.(2015)}]{MFE15}
\textsc{McNeil, A. J.}, \textsc{Frey, R.} \& \textsc{Embrechts, P.} (2015).
\newblock {\em Quantitative Risk Management: Concepts, Techniques and Tools}. Revised Edition. Princeton, NJ: Princeton University Press.

\bibitem[{Ycart(1985)}]{Ycart85}
\textsc{Ycart, B.} (1985).
\newblock Extreme points in convex sets of symmetric matrices.
\newblock \textit{Proceedings of the American Mathematical Society} \textbf{95}(4), 607--612.

\end{thebibliography}

\end{document}